\newtheorem{theorem}{Theorem}[section]
\newtheorem{lemma}[theorem]{Lemma}
\newtheorem{proposition}[theorem]{Proposition}
\newtheorem{corollary}[theorem]{Corollary}
\theoremstyle{definition}
\newtheorem{definition}[theorem]{Definition}
\theoremstyle{remark}
\newtheorem{remark}[theorem]{Remark}
\numberwithin{equation}{section}
\begin{document}


\def\Ga{\Gamma}   
\def\Om{\Omega}   
\def\al{\alpha}     
\def\be{\beta}       
\def\ga{\gamma}      
\def\la{\lambda}     

\def\mM{\mathbb{M}} 

\def\dM{\mathfrak{M}} 
\def\dT{\mathfrak{T}}

 \def\kS{\mathcal{S}}
\def\kI{\mathcal{I}} 
\def\kM{\mathcal{M}} 

\def\tiv{\tilde{V}}	  
\def\tph{\tilde{\phi}}
\def\tal{\tilde{\al}}   
\def\tbe{\tilde{\be}}  
\def\tiw{\tilde{W}}	
\def\tps{\tilde{\psi}}


\def\xx{\times}
\def\*{\otimes}		
\def\+{\oplus}		
 \def\sbe{\subseteq}
\def\0{\emptyset}
\def\cc{\boldsymbol\cdot}
\def\ito{\stackrel\sim\to}
\def\tto{\twoheadrightarrow}
\def\cir{\mathop{\scriptstyle\circ}}

\def\mtr#1{\begin{pmatrix}#1\end{pmatrix}}
\def\smtr#1{\left(\begin{smallmatrix}#1\end{smallmatrix}\right)}


\def\Mat{\mathop\mathrm{Mat}\nolimits}
\def\Hom{\mathop\mathrm{Hom}\nolimits}
\def\rk{\mathop\mathrm{rk}\nolimits}
\def\Mod{\mbox{-}\mathrm{Mod}}
\def\Mdr{\mathrm{Mod}\mbox{-}}
\def\End{\mathop\mathrm{End}\nolimits}
\def\ker{\mathop\mathrm{Ker}\nolimits}
\def\im{\mathop\mathrm{Im}\nolimits}
\def\chr{\mathop\mathrm{char}\nolimits}
\def\Rep{\mathop\mathrm{Rep}}  
\def\pr{\mathrm{pr}}


\def\iff{if and only if }
\def\wrt{with respect to }


\title{Representations of Munn algebras and related semigroups}
\author[Yu. Drozd]{Yuriy A. Drozd*}
\address{National Academy of Sciences of Ukraine \\ Institute of Mathematics\\  01024 Kyiv\\ Ukraine}
\email{y.a.drozd@gmail.com (corresponding author)}
\urladdr{www.imath.kiev.ua/$\sim$drozd}
\author[A. Plakosh]{Andriana I. Plakosh}
\address{National Academy of Sciences of Ukraine \\ Institute of Mathematics\\  01024 Kyiv\\ Ukraine}
\email{andrianaplakoshmail@gmail.com}
\dedicatory{To the memory of Iosif Solomonovich Ponizovski\u{\i}}
\keywords{Munn algebras, Rees matrix semigroups, representations of valued graphs, representation types}
\subjclass{20M30, 16G60, 20M25, 16G20}

\begin{abstract}
  We establish representation types (finite, tame or wild) of finite dimensional Munn algebras with semisimple bases. As an application, we 
  establish representation types of finite Rees matrix semigroups, in particular, $0$-simple semigroups, and their mutually annihilating unions.
\end{abstract}

\maketitle


\section*{Introduction}

 Munn algebras appeared in the theory of semigroups as semigroup algebras of \emph{completely $0$-simple} semigroups
 \cite{ClifPres1,Okninski}. They were immediately used for the study of representations of such semigroups. An important input
 was made by Ponizovski{\u\i} in the paper \cite{Poniz}, where he established the cases when a finite $0$-simple semigroup
 is \emph{representation finite}, i.e. only has finitely many indecomposable representations, over an algebraically closed 
 field $\Bbbk$ whose characteristic does not divide the order of the underlying group of its Rees matrix presentation 
 \cite[Th.\,3.5]{ClifPres1}. He also considered the case 
 of semigroups that are unions of  mutually annihilating $0$-simple semigroups with common zero. 
 
 The questions remained what happens if the field is not algebraically closed and when the representation type of such a
 semigroup is \emph{tame}, i.e. indecomposable representations of each dimension form a finite number of
 $1$-parameter families. In this article we give a complete answer to these questions (also for the fields of characteristics 
 that does non divide the orders of the underlying groups). Of course, in the case of an algebraically
 closed field our criterion of finiteness coincides with that of Ponizovski\u\i. 
 Actually, we obtain criteria of finiteness and tameness for all Munn algebras with semisimple base, 
 even in a bit more wide context than they are considered in \cite{ClifPres1}.
 To prove these results, we establish a relation of modules over Munn algebras with \emph{representations of valued graphs} 
 in the sense of \cite{DR} (in the algebraically closed case they are just \emph{representations of quivers} in the sense of \cite{GabindI}). 
 Then we apply the criteria from this paper. 
 
 It follows from \cite{tame-wild} (and can be easily checked directly) that in all other cases the Munn algebra $\mM$ (or the corresponding semigroup) is
  \emph{representation wild} over the field $\Bbbk$, i.e. for every finitely generated $\Bbbk$-algebra $A$ there is an exact functor 
  $A\Mod\to\mM\Mod$ mapping non-isomorphic modules to non-isomorphic and indecomposable to indecomposable.

 \section{Munn algebras}
 \label{s1}

 In this paper \emph{algebra} means an associative algebra over a commutative ring $\Bbbk$. We do not suppose that such an algebra is unital,
 but always suppose that modules over such algebra are also $\Bbbk$-modules and the multiplication by elements of the algebra is $\Bbbk$-bilinear.
 We denote by $A\Mod$ and $\Mdr A$, respectively, the categories of left and right $A$-modules.
 By $A^1$ we denote the algebra obtained from an algebra $A$ by the formal attachment of unit. Then the categories of $A$-modules
 and unital $A^1$-modules are equivalent. So $A$ and $B$ are Morita equivalent \iff so are $A^1$ and $B^1$. We consider the elements from $A^1$
 as formal sums $\la+ a$, where $a\in A,\,\la\in\Bbbk$.

\begin{definition}\label{def11} 
 \begin{enumerate}
 \item
  Let $R$ be a $\Bbbk$-algebra and $\mu:N\to M$ be a homomorphism of $R$-modules. Define a multiplication on $\Hom_R(M,N)$
 setting $a\cc b=a\mu b$. The resulting ring is called a \emph{Munn algebra}  and denoted by $\mM(R,M,N,\mu)$.\!%
 \footnote{\,This definition is a bit more general than that from \cite{ClifPres1} or \cite{Okninski}, where only the case of free modules is considered.}
 We say that this Munn algebra \emph{is based} on the algebra $R$.
 We denote by $\mM^1(R,M,N,\mu)$ the algebra obtained from $\mM(R,M,N,\mu)$ by the formal attachment of unit.
 \item  A Munn algebra $\mM(R,M,N,\mu)$ is said to be \emph{regular} if the homomorphism $\mu$ is von Neumann regular, i.e. there is a
 homomorphism $\theta:M\to N$ such that $\mu\theta\mu=\mu$.
For instance, this is the case if $R$ is von Neumann regular, while $M$ and $N$ are finitely generated and projective and $\mu\ne0$
(it follows from \cite[Th.\,1.7]{GdrlVNeum}).
 \end{enumerate}
\end{definition}

\begin{remark}\label{rem12} 
 One can see that $\mM(R,M,N,\mu)$ has a unit \iff there are decompositions $M\simeq M_1\+M_2$ and $N\simeq N_1\+N_2$ such that
 $\Hom_R(M_2,N)=\Hom_R(M,N_2)=0$ and the map $\bar\mu=\pr_1\cir\mu|_{N_1}$ is an isomorphism $N_1\ito M_1$. Then the unit $u:M\to N$
 coincides with $\bar{\mu}^{-1}$. Actually, in this case $\mM(R,M,N,\mu)\simeq\mM(R,M_1,N_1,\bar{\mu})\simeq\End_RM_1$.
\end{remark}

\begin{proposition}\label{prop13} 
 Let $\mM(R,M,N,\mu)$ be a regular Munn algebra. There are isomorphisms $M\simeq L\+M'$ and $N\simeq L\+N'$ such that \wrt these decompositions
 $\mu=\smtr{1_L&0\\0&0}$.
\end{proposition}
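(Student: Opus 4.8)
The plan is to build the two decompositions directly from a von Neumann witness $\theta\colon M\to N$, chosen so that $\mu\theta\mu=\mu$. First I would pass to the endomorphisms $p=\mu\theta\in\End_R M$ and $q=\theta\mu\in\End_R N$ (composition of homomorphisms read right to left). They are idempotents, since $p^2=\mu\theta\mu\theta=(\mu\theta\mu)\theta=\mu\theta=p$, and likewise $q^2=q$; consequently one has $R$-module decompositions $M=\im p\+\ker p$ and $N=\im q\+\ker q$, and the task is to recognise these summands intrinsically.

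Next I would identify $\im p$ with $\im\mu$ and $\ker q$ with $\ker\mu$. The first is immediate: $\im\mu=\im(\mu\theta\mu)\sbe\im(\mu\theta)=\im p\sbe\im\mu$. For the second, note that $\mu n=(\mu\theta\mu)n=\mu(qn)$ for every $n\in N$, so $qn=0$ forces $\mu n=0$; combined with the trivial inclusion $\ker\mu\sbe\ker q$ this gives $\ker q=\ker\mu$. Setting $L:=\im q$, $N':=\ker\mu$ and $M':=\ker p$, we thus get $M=\im\mu\+M'$ and $N=L\+N'$. Moreover $\mu$ restricts to an isomorphism $L\ito\im\mu$: it is onto because $\mu(\im q)=\im\mu$ (again using $\mu q=\mu\theta\mu=\mu$), and injective because $L\cap\ker\mu=\im q\cap\ker q=0$.

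Finally I would write $\mu$ as a $2\xx2$ matrix relative to the decompositions $N=L\+N'$ (source) and $M=\im\mu\+M'$ (target): the block over $N'=\ker\mu$ vanishes, and the block over $L$ is the isomorphism $\mu|_L$ with values in $\im\mu$, so $\mu=\smtr{\mu|_L&0\\0&0}$. Identifying the summand $\im\mu$ of $M$ with $L$ along $(\mu|_L)^{-1}$ converts this into $\smtr{1_L&0\\0&0}$ and yields the decompositions $M\simeq L\+M'$, $N\simeq L\+N'$ asserted in the statement. I do not anticipate any real obstacle here; the one point needing care is bookkeeping the two copies of $L$ — the literal submodule $\im q\sbe N$ versus the relabelled copy of $\im\mu\sbe M$ — and insisting that the identification between them be $\mu$ itself, so that the upper-left block comes out as $1_L$ rather than an unnamed isomorphism. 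Everything else is the standard fact that a von Neumann regular homomorphism splits off both its kernel from the source and its image from the target.
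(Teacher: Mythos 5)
Your argument is correct and is essentially the paper's own proof: both start from a witness $\theta$ with $\mu\theta\mu=\mu$, split $M$ and $N$ along the idempotents $\mu\theta$ and $\theta\mu$, identify $\im\mu\theta=\im\mu$ and $\ker\theta\mu=\ker\mu$, and observe that $\mu$ restricts to an isomorphism from $\im\theta\mu$ onto $\im\mu$, which is then used to relabel the image summand as $L$ so the upper-left block becomes $1_L$. No substantive difference.
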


\begin{proof}
Let $\theta:M\to N$ be such that $\mu\theta\mu=\mu$. Then $\mu\theta:M\to M$ and $\theta\mu:N\to N$ are idempotents. Therefore,
$M=M_1\+M_2$, where $M_1=\im\mu\theta,\,M_2=\ker\mu\theta$ and $N=N_1\+N_2$, where $N_1=\im\theta\mu,\,N_2=\ker\theta\mu$.
One easily sees that $\ker\mu=\ker\theta\mu$ and $\im\mu=\im\mu\theta$, so $\bar{\mu}=\pr_1\cir\mu|_{N_1}$ is an isomorphism and 
$\bar{\mu}^{-1}=\pr_1\cir\theta|_{M_1}$, while $\mu|_{N_2}=0$ and $\pr_2\cir\mu=0$, hence
$\mu=\smtr{\bar\mu &0\\0&0}$ \wrt these decompositions. Obviously, it implies the claim.
\end{proof}

\begin{definition}\label{def14} 
 We write $\mM(R,L,M,N)$ instead of $\mM(R,L\+M,L\+N,\mu)$, where $\mu=\smtr{1_L&0\\0&0}$, and call such a Munn algebra \emph{normal}. 
 Thus every regular Munn algebra is isomorphic to a normal one. As above, we denote by $\mM^1(R,L,M,N)$ the algebra obtained 
 from $\mM(R,L,M,N)$ by the formal attachment of unit.
\end{definition}

\begin{lemma}\label{lem15} 
 Let $A$ and $C$ be two rings, $P$ be a right $C$-module, $M$ be a right $A$-module and $N$ be a right $A$--left $C$-bimodule.
 Define the natural map $\phi:P\*_C\Hom_A(M,N)\to\Hom_A(M,P\*_CN)$ mapping $p\*f$ to the homomorphism $x\mapsto p\*f(x)$.
 If $P$ is projective and either $P$ or $M$ is finitely generated, $\phi$ is an isomorphism.
\end{lemma}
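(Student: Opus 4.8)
The plan is the standard dévissage for such ``$\Hom$--$\*$ interchange'' maps: verify that $\phi$ is a natural transformation of additive functors, check the case $P=C$ by hand, and then extend to free and then projective $P$ by additivity together with a direct-summand (retract) argument, using the two finiteness hypotheses to control the passage to infinite direct sums.

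First I would confirm that $\phi$ is well defined. The assignment $p\*f\mapsto(x\mapsto p\*f(x))$ is $C$-balanced because $pc\*f(x)=p\*c\,f(x)=p\*(cf)(x)$ in $P\*_CN$, and the resulting map $M\to P\*_CN$ is right $A$-linear since the right $A$-action on $P\*_CN$ is carried by the $N$-factor. Holding $M$ and $N$ fixed, both $P\mapsto P\*_C\Hom_A(M,N)$ and $P\mapsto\Hom_A(M,P\*_CN)$ are additive covariant functors in the right $C$-module $P$, and a one-line diagram chase shows that $\phi=\phi_P$ is natural in $P$.

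Next I would treat the base case $P=C$: the canonical isomorphisms $C\*_C\Hom_A(M,N)\ito\Hom_A(M,N)$ and $\Hom_A(M,N)\ito\Hom_A(M,C\*_CN)$ compose to $\phi_C$, so $\phi_C$ is an isomorphism. Additivity then gives that $\phi_F$ is an isomorphism for every finitely generated free right $C$-module $F$. If $P$ is finitely generated projective, write $F\simeq P\+Q$ with $F$ finite free; since $\phi$ is an additive natural transformation it respects this decomposition, so $\phi_F$ is identified with $\phi_P\+\phi_Q$, and a direct sum of maps is an isomorphism only if each summand is --- this settles the case ``$P$ finitely generated''. For the case ``$M$ finitely generated'' the only extra ingredient is that $\Hom_A(M,-)$ commutes with arbitrary direct sums when $M$ is finitely generated (any homomorphism out of $M$ lands in a finite sub-sum, since the finitely many images of generators do), while the tensor functors $-\*_C\Hom_A(M,N)$ and $-\*_CN$ always do; hence the same argument applies with $F$ replaced by an arbitrary free right $C$-module and then with $P$ an arbitrary direct summand of a free module.

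The balancedness and naturality checks are routine. The only point needing a little care --- the ``main obstacle'', such as it is --- is keeping the two hypotheses separated: without ``$M$ finitely generated'' one cannot move past finite free $P$, so the retract argument for a general projective $P$ must be run over arbitrary free modules and hence must invoke the direct-sum property of $\Hom_A(M,-)$; conversely, if one assumes only that $P$ is finitely generated projective, no commutation with infinite sums is needed and plain finite additivity suffices.
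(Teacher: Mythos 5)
Your proof is correct, and it is the standard d\'evissage (naturality in $P$, the base case $P=C$, finite additivity, the retract argument, and commutation of $\Hom_A(M,-)$ with arbitrary direct sums when $M$ is finitely generated) that the paper has in mind when it dismisses the proof as obvious. You have correctly isolated the one point where the two finiteness hypotheses enter differently, so nothing is missing.
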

 The proof is obvious. \qed

\begin{lemma}\label{lem16} 
 Let $A$ be a unital ring, $1=e_1+e_2$, where $e_1,e_2$ are orthogonal idempotents. We denote $A_i=e_iA$, $A_{ij}=e_iAe_j\simeq\Hom_A(A_j,A_i)$
 and identify $A$ with the ring of matrices
 \begin{equation}\label{eq12} 
   \mtr{A_{11} & A_{12} \\ A_{21} & A_{22} }.
 \end{equation}
 Let $P$ be a progenerator of the category $\Mdr A_{11}$. 
 Then $P^\sharp=(P\*_{A_{11}}A_1)\+A_2$ is a progenerator of the category $\Mdr A$, hence $A\Mod\simeq B\Mod$, 
 where $B=\End_AP^\sharp$. The ring $B$ can be identified with the ring of matrices
 \begin{equation}\label{eq11} 
 B=  \mtr{ B_{11}  & B_{12} \\ B_{21} & B_{22} },
 \end{equation}
 where
 $
  B_{11}=\End_{A_{11}}P,\,
  B_{12}=P\*_{A_{11}}A_{12},\,
  B_{21}=\Hom_{A_{11}}(P,A_{21}),\,
  B_{22}=A_{22}.
 $
\end{lemma}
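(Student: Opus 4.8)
The plan is to verify in turn that $P^\sharp$ is finitely generated projective, that it is a generator, and then to compute its endomorphism ring blockwise. First I would observe that since $P$ is a progenerator of $\Mdr A_{11}$, it is a direct summand of some $A_{11}^n$, hence $P\*_{A_{11}}A_1$ is a direct summand of $(A_{11}\*_{A_{11}}A_1)^n\simeq A_1^n$; thus $P\*_{A_{11}}A_1$ is finitely generated and projective as a right $A$-module, and so is $P^\sharp=(P\*_{A_{11}}A_1)\+A_2$, being a sum of such with the projective $A_2=e_2A$. For the generator property I would use that $A_{11}$ is a direct summand of $P^m$ for some $m$ (progenerator), so $A_1=A_{11}\*_{A_{11}}A_1$ is a summand of $(P\*_{A_{11}}A_1)^m$, hence $A=A_1\+A_2$ is a summand of a finite direct sum of copies of $P^\sharp$; therefore $P^\sharp$ is a progenerator and $\Mdr A\simeq\Mdr B$ with $B=\End_AP^\sharp$, which also yields $A\Mod\simeq B\Mod$.

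Next I would compute $B=\End_A\bigl((P\*_{A_{11}}A_1)\+A_2\bigr)$ as a $2\times2$ matrix of Hom-groups between the two summands, so that $B_{11}=\End_A(P\*_{A_{11}}A_1)$, $B_{12}=\Hom_A(A_2,\,P\*_{A_{11}}A_1)$, $B_{21}=\Hom_A(P\*_{A_{11}}A_1,\,A_2)$, $B_{22}=\End_A(A_2)$, with the multiplication being the usual matrix multiplication induced by composition (note the ordering convention for $\Hom$ must be chosen consistently with how matrices act). The two easy entries are $B_{22}=\End_A(e_2A)\simeq e_2Ae_2=A_{22}$ by the standard identification $\End_A(eA)\simeq eAe$ for an idempotent $e$, and $\Hom_A(e_2A,X)\simeq Xe_2$ for any right $A$-module $X$, which applied to $X=P\*_{A_{11}}A_1$ gives $B_{12}=(P\*_{A_{11}}A_1)e_2=P\*_{A_{11}}(A_1e_2)=P\*_{A_{11}}A_{12}$, since $A_1e_2=e_1Ae_2=A_{12}$.

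For the remaining two entries I would invoke Lemma~\ref{lem15}. For $B_{11}$: applying the lemma with $C=A_{11}$, the projective (indeed progenerator) module $P$ in place of both the ``$P$'' and with $A=A$ in the lemma's notation, $M=A_1$ and $N=A_1$ viewed as an $A$--$A_{11}$-bimodule (it is $A_{11}=e_1Ae_1$-left because $A_1=e_1A$), the map $\phi$ identifies $P\*_{A_{11}}\Hom_A(A_1,A_1)\ito\Hom_A(A_1,\,P\*_{A_{11}}A_1)$; since $\Hom_A(A_1,A_1)\simeq e_1Ae_1=A_{11}$, the left side is $P\*_{A_{11}}A_{11}\simeq P$, and one further checks $\Hom_A(A_1,P\*_{A_{11}}A_1)\simeq\Hom_A(P\*_{A_{11}}A_1,\,P\*_{A_{11}}A_1)$ is compatible so that $B_{11}\simeq\End_{A_{11}}P$ — here the point is that an $A$-endomorphism of $P\*_{A_{11}}A_1$ is the same as an $A_{11}$-endomorphism of $P$ after restricting along $A_{11}\hookrightarrow A$ and using $A_1e_1=A_{11}$. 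For $B_{21}$: again by Lemma~\ref{lem15} (or directly), $\Hom_A(P\*_{A_{11}}A_1,\,A_2)\simeq\Hom_{A_{11}}(P,\,\Hom_A(A_1,A_2))\simeq\Hom_{A_{11}}(P,\,e_1Ae_2)=\Hom_{A_{11}}(P,A_{21})$, using the adjunction/Hom-tensor identity together with $\Hom_A(e_1A,A_2)\simeq A_2e_1=e_2Ae_1=A_{21}$ as a left $A_{11}$-module.

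The main obstacle I expect is bookkeeping rather than conceptual: one must fix a single convention for the order of composition in $\Hom$ and for how a matrix in $B$ acts on the column $(P\*_{A_{11}}A_1,\,A_2)^{\mathrm t}$, and then check that the four identifications above are compatible with matrix multiplication, i.e. that the bimodule actions of $B_{11}$ on $B_{12}$, of $B_{22}$ on $B_{21}$, etc., match under the isomorphisms. This amounts to unwinding the natural map $\phi$ of Lemma~\ref{lem15} on generators $p\*f$, which is routine but must be done carefully to be sure no twist is introduced; once the conventions are pinned down, each verification is a one-line diagram chase.
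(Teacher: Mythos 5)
Your proposal follows the same route as the paper's proof: show $P^\sharp$ is a progenerator, then compute $\End_AP^\sharp$ blockwise using Lemma~\ref{lem15} together with the identifications $\Hom_A(e_iA,X)\simeq Xe_i$. (You are in fact slightly more careful than the paper, which only spells out the generator half of the progenerator property.) One step is misstated, however: in the $B_{11}$ computation you assert $\Hom_A(A_1,P\*_{A_{11}}A_1)\simeq\Hom_A(P\*_{A_{11}}A_1,P\*_{A_{11}}A_1)$, which is false in general --- the left side is $\simeq (P\*_{A_{11}}A_1)e_1\simeq P$, while the right side is $\simeq\End_{A_{11}}P$ (take $P=A_{11}^2$ to see these differ). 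The correct order, as in the paper, is to apply the tensor--hom adjunction first, $\End_A(P\*_{A_{11}}A_1)\simeq\Hom_{A_{11}}\bigl(P,\Hom_A(A_1,P\*_{A_{11}}A_1)\bigr)$, and only then use Lemma~\ref{lem15} to identify the inner Hom with $P\*_{A_{11}}A_{11}\simeq P$; your own closing parenthetical shows you intend exactly this, so the repair is immediate and the rest of the argument stands.
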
 
\begin{proof}
For some $m$ there is an epimorphism of $A_{11}$-modules $P^m\tto A_{11}$, which induces an epimorphism $(P\*_{A_{11}}A_1)^m\tto A_1$.
Hence, $A$ is a direct summand of $(P\*_{A_{11}}A_1)^m\+A_2$ and $P^\sharp$ is a progenerator of $A\Mod$. 
Using Lemma~\ref{lem15}, we obtain:
\begin{align*}
 & \Hom_A(P\*_{A_{11}} A_1,P\*_{A_{11}} A_1)\simeq \\
 & \simeq \Hom_{A_{11}}(P,\Hom_A(A_1,P\*_{A_{11}}A_1) \simeq \\
 & \simeq\Hom_{A_{11}}(P,P\*_{A_{11}}A_{11})\simeq  \End_{A_{11}}P;\\
 & \Hom_A(A_2,P\*_{A_{11}}A_1)\simeq P\*_{A_{11}}A_{12};\\
 & \Hom_A(P\*_{A_{11}}A_1,A_2)\simeq \Hom_{A_{11}}(P,A_{21}).
\end{align*}
 It gives the presentation \eqref{eq11} for $\End_AP^\sharp$.
\end{proof}

\begin{theorem}\label{th17} 
   Let $\mM=\mM(R,L,M,N)$ be a normal Munn algebra, $C=\End_RL$ and $P$ be a progenerator of the category $\Mdr C$.
   Then $\mM$ is Morita equivalent to the normal Munn algebra $\mM(R,P\*_CL,M,N)$.
\end{theorem}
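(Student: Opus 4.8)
The plan is to pass to unital algebras and invoke Lemma~\ref{lem16}. Since, as recalled above, two algebras are Morita equivalent \iff the algebras obtained from them by formal attachment of a unit are, it suffices to prove that $\mM^1=\mM^1(R,L,M,N)$ and $\mM^1(R,P\*_CL,M,N)$ are Morita equivalent. Write $\mM=\mM(R,L\+M,L\+N,\mu)$ with $\mu=\smtr{1_L&0\\0&0}$, and let $e\in\mM$ be the homomorphism $L\+M\to L\+N$ given by the same matrix. A direct block computation (in the spirit of the proof of Proposition~\ref{prop13}, using that the entries of $\mu$ are $1_L$ and $0$) shows that $e$ is an idempotent of $\mM$ with $e\cc a\cc e=\smtr{a_{11}&0\\0&0}$ for $a=\smtr{a_{11}&a_{12}\\a_{21}&a_{22}}\in\mM$; hence $e\mM e\simeq\End_RL=C$ as rings, with $e$ corresponding to $1_C$.

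Next I would put $A=\mM^1$, $e_1=e$, $e_2=1-e$, and identify $A$, as in Lemma~\ref{lem16}, with the ring of matrices $\smtr{A_{11}&A_{12}\\A_{21}&A_{22}}$, $A_{ij}=e_iAe_j$. The same block bookkeeping gives $A_{11}=C$, $A_{12}\simeq\Hom_R(M,L)$, $A_{21}\simeq\Hom_R(L,N)$, while $A_{22}=e_2Ae_2$ depends only on $M$ and $N$ (any product of two elements of $e_2\mM e_2$ vanishes, as $\mu$ annihilates the $N$-summand of $L\+N$) and is precisely the $(2,2)$-corner of $\mM^1(R,L'',M,N)$ for every $R$-module $L''$. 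Applying Lemma~\ref{lem16} to $A$ with this decomposition and the given progenerator $P$ of $\Mdr A_{11}=\Mdr C$, I obtain a ring $B$, of the analogous matrix form, with $A\Mod\simeq B\Mod$, $B_{11}=\End_CP$, $B_{12}=P\*_C\Hom_R(M,L)$, $B_{21}=\Hom_C(P,\Hom_R(L,N))$ and $B_{22}=A_{22}$.

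It then remains to recognise $B$ as $\mM^1(R,L',M,N)$, where $L'=P\*_CL$ (viewed as an $R$-module through $L$). Since $P$ is a progenerator, it is finitely generated and projective, so Lemma~\ref{lem15} applies and gives $B_{12}\simeq\Hom_R(M,L')$ directly, and, applied with $L$ in place of $M$, also gives $\Hom_R(L,L')\simeq P\*_C\Hom_R(L,L)=P\*_CC=P$ as right $C$-modules. The tensor-hom adjunction then yields $B_{21}=\Hom_C(P,\Hom_R(L,N))\simeq\Hom_R(P\*_CL,N)=\Hom_R(L',N)$ and $B_{11}=\End_CP=\Hom_C(P,P)\simeq\Hom_C(P,\Hom_R(L,L'))\simeq\Hom_R(P\*_CL,L')=\End_RL'$. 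Since moreover $B_{22}=A_{22}$ is the $(2,2)$-corner of $\mM^1(R,L',M,N)$, comparison with the matrix description of $\mM^1(R,L',M,N)$ (obtained exactly as in the second paragraph, now for $L'$) identifies $B$ corner by corner with $\mM^1(R,L',M,N)$. Hence $B\simeq\mM^1(R,L',M,N)$, so $\mM^1\Mod\simeq\mM^1(R,L',M,N)\Mod$, and the theorem follows.

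The step I expect to require the most care is the final one: checking that the four corner isomorphisms just produced are compatible with the multiplication maps $A_{ij}\xx A_{jk}\to A_{ik}$ of the matrix ring (equivalently, with the relevant bimodule structures), so that they genuinely assemble into a \emph{ring} isomorphism $B\simeq\mM^1(R,L',M,N)$ rather than a mere collection of module isomorphisms. This should follow from the naturality of the homomorphism $\phi$ of Lemma~\ref{lem15} and of the adjunction isomorphisms, but it has to be spelled out; a lesser, notational nuisance is keeping the left/right actions on the bimodule $L$ straight so that Lemma~\ref{lem15} applies verbatim.
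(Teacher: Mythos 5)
Your proposal is correct and follows essentially the same route as the paper: pass to $\mM^1$, take the idempotent $e_1=\smtr{1&0\\0&0}$ (whose corner ring is $C$), apply Lemma~\ref{lem16} with the progenerator $P$, and then use Lemma~\ref{lem15} together with tensor--hom adjunction to identify the four corners of $B$ with those of $\mM^1(R,P\*_CL,M,N)$. The only difference is that you flag explicitly (and rightly) the compatibility of the corner isomorphisms with the multiplication, a point the paper leaves implicit.
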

\begin{proof}
 Let $A=\mM^1(R,L,M,N)$. Consider the idempotents $e_1=\smtr{1&0\\0&0}$ and $e_2=1-e_1$. The presentation
 \eqref{eq12} of the algebra $A$ is of the form
 \begin{equation}\label{eq13} 
  \mtr{ C & \Hom_R(M,L) \\ \Hom_R(L,N) & \Bbbk+\Hom_R(M,N) }
 \end{equation}
  By Lemma~\ref{lem16}, $A$ is Morita equivalent to the algebra $B$ of the matrices of the form \eqref{eq11}, where,
  due to Lemma~\ref{lem15},
  \begin{align*}
   B_{11}&=\Hom_C(P,P)\simeq \Hom_C\big(P,P\*_C\Hom_R(L,L)\big)\simeq \\
   			&\simeq \Hom_C(P,\Hom_R(L,P\*_CL))\simeq \Hom_R(P\*_CL,P\*_CL);\\
   B_{12}&=P\*_C\Hom_R(M,L)\simeq \Hom_R(M,P\*_CL);\\
   B_{21}&=\Hom_C(P,\Hom_R(L,N))\simeq\Hom_R(P\*_CL,N);\\
   B_{22}&=\Bbbk+ \Hom_R(M,N).  			
  \end{align*}
  But it is just the matrix presentation of $\mM^1(R,P\*_CL,M,N)$.
  \end{proof}
 
  The following fact is evident.
  
  \begin{proposition}\label{prop18} 
   $\prod_{k=1}^s\mM(R_k,M_k,N_k,\mu_k)\simeq \mM(R,M,N,\mu)$, where 
   $R=\prod_{k=1}^sR_k,\,M=\bigoplus_{k=1}^sM_k,\,N=\bigoplus_{k=1}^sN_k$ and $\mu|_{N_k}=\mu_k$.
  \end{proposition}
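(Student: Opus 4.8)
The plan is to unwind both sides as $\Bbbk$-modules and check that the Munn multiplications agree componentwise. Write $e_k\in R=\prod_k R_k$ for the central idempotent projecting onto the $k$-th factor. For any $R$-module $V$ one has $V=\bigoplus_k e_kV$, and $e_kV$ carries a natural $R_k$-module structure on which the remaining factors act by zero; applied to $M$ and $N$ this recovers the given decompositions $M=\bigoplus_k M_k$, $N=\bigoplus_k N_k$ with $M_k=e_kM$, $N_k=e_kN$.

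First I would establish the identification $\Hom_R(M,N)\simeq\bigoplus_k\Hom_{R_k}(M_k,N_k)$. Since the sums are finite, $\Hom_R(M,N)=\bigoplus_{k,l}\Hom_R(M_k,N_l)$; for $k\ne l$, a map $f:M_k\to N_l$ and $x\in M_k$ satisfy $f(x)=f(e_kx)=e_kf(x)\in e_kN_l=0$, so only the diagonal terms survive, and on the diagonal $R$-linearity of a map $M_k\to N_k$ is the same as $R_k$-linearity because $R$ acts through $R_k$ on both modules. The same idempotent computation applied to $\mu$ itself shows $\mu(N_k)=\mu(e_kN)\sbe e_kM=M_k$, so under the hypothesis $\mu|_{N_k}=\mu_k$ the map $\mu$ is ``diagonal'' with blocks $\mu_k:N_k\to M_k$.

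It then remains to transport the multiplication. Writing $a=\sum_k a_k$ and $b=\sum_k b_k$ with $a_k,b_k\in\Hom_{R_k}(M_k,N_k)$, the composite $a\cc b=a\mu b$ sends $M_k$ into $N_k$ via $a_k\mu_k b_k$ and annihilates the cross terms, so $a\cc b=\sum_k a_k\mu_k b_k$, which is precisely the componentwise product in $\prod_{k}\mM(R_k,M_k,N_k,\mu_k)$, a finite product of (non-unital) algebras being the direct sum with componentwise multiplication. Hence $a\mapsto(a_k)_k$ is the desired $\Bbbk$-algebra isomorphism. No step here is a genuine obstacle; the only point that is not purely formal is the vanishing of the off-diagonal $\Hom$-groups, which is the one-line idempotent argument above.
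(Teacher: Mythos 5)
Your argument is correct, and it is worth noting that the paper itself offers no proof at all: Proposition~\ref{prop18} is introduced with the single sentence ``The following fact is evident,'' so your write-up is a legitimate filling-in of the omitted verification rather than a divergence from the authors' route. The three steps you isolate --- vanishing of the off-diagonal $\Hom$-groups via the central idempotents $e_k$, the identification of $R$-linearity with $R_k$-linearity on the diagonal blocks, and the componentwise computation $a\cc b=\sum_k a_k\mu_k b_k$ --- are exactly what makes the statement evident, and each is carried out correctly. The one point you should make explicit is that your idempotent argument presupposes that each $R_k$ is unital and that the modules $M_k,N_k$ are unital $R_k$-modules: the paper expressly allows non-unital algebras, and without unitality the statement can fail (e.g.\ with $R_1=R_2=0$, $M_1=N_2=\Bbbk$, $M_2=N_1=0$, the left-hand side is zero while $\Hom_R(M,N)=\Bbbk$). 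This is harmless for the paper, since the proposition is only invoked for semisimple $R=\prod_k\Mat(d_k,F_k)$, but a careful proof should record the hypothesis it is using.
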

  
  \begin{remark}\label{rem19} 
   Note that $\prod_{k=1}^s\mM^1(R_k,M_k,N_k,\mu_k)\not\simeq \mM^1(R,M,N,\mu)$.
  \end{remark}
 
 Let now $R$ be a semisimple ring. Then $R=\prod_{k=1}^sR_k$, where $R_k=\Mat(d_k,F_k)$ for some
 integers $d_k$ and some skewfields $F_k$. So any Munn algebra based on $R$ is a product of Munn algebras based on the simple
 algebras $R_k$. All of them are regular, so can be supposed normal. 
 
 \begin{proposition}\label{prop110} 
    Let $R=\Mat(d,F)$, where $F$ is a skewfield, $U$ be the simple $R$-module, $L=U^r,\,M=U^m,\,N=U^n$. The algebra $\mM(R,L,M,N)$,
  up to isomorphism, only depends on $r,m,n$ and does not depend on $d$. In particular, it is isomorphic to $\mM(F,F^r,F^m,F^l)$.
  
  \smallskip
  \emph{We denote the algebra $\mM(F,F^r,F^m,F^n)$ by $\mM(F,r,m,n)$.\!}%
   \footnote{\,Ponizovski{\u\i} \cite{Poniz} denotes this algebra by $\mathfrak{A}(E_{m+r,n+r,r},F)$.}
  \end{proposition}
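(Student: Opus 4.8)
The plan is to push the whole construction down to the base skewfield $F$ by a Morita equivalence, using that the recipe of Definition~\ref{def11} is invariant under equivalences of the base module category.

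First I would recall that $R=\Mat(d,F)$ is Morita equivalent to $F$: the simple $R$-module $U$ is a progenerator of $R\Mod$, with $R\simeq U^d$, so there is a $\Bbbk$-linear equivalence $\Phi\colon R\Mod\ito F\Mod$. Since $\Phi$ preserves simple objects and finite direct sums, $\Phi(U)$ is the simple $F$-module, i.e.\ $\Phi(U)\simeq F$, and hence $\Phi(U^k)\simeq F^k$ for every $k\ge 0$.

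The key (and essentially only) point is that a $\Bbbk$-linear equivalence transports Munn algebras. Indeed, such a $\Phi$ induces for all $X,Y\in R\Mod$ a $\Bbbk$-linear bijection $\Hom_R(X,Y)\ito\Hom_F(\Phi X,\Phi Y)$ which is natural, compatible with composition, and sends identities to identities and zero maps to zero maps. Consequently, for any morphism $\mu\colon Y\to X$ it restricts to a $\Bbbk$-algebra isomorphism from $\Hom_R(X,Y)$ with the twisted multiplication $a\cc b=a\mu b$ onto $\Hom_F(\Phi X,\Phi Y)$ with $a\cc b=a(\Phi\mu)b$; that is, in the notation of Definition~\ref{def11}, $\mM(R,X,Y,\mu)\simeq\mM(F,\Phi X,\Phi Y,\Phi\mu)$. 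Then I would apply this with $X=L\+M=U^{r+m}$, $Y=L\+N=U^{r+n}$ and $\mu=\smtr{1_L&0\\0&0}$, since by Definition~\ref{def14} this is exactly $\mM(R,L,M,N)$. As $\Phi$ is additive it carries the decompositions $X=L\+M$, $Y=L\+N$ to $\Phi X\simeq F^r\+F^m$, $\Phi Y\simeq F^r\+F^n$ (apply $\Phi$ to each summand), and it sends $1_L$ to $1_{F^r}$ and the zero blocks to zero; hence with respect to these decompositions $\Phi\mu=\smtr{1_{F^r}&0\\0&0}$. Therefore
\[
\mM(R,L,M,N)\;\simeq\;\mM\big(F,\,F^r\+F^m,\,F^r\+F^n,\,\smtr{1_{F^r}&0\\0&0}\big)\;=\;\mM(F,F^r,F^m,F^n),
\]
the last equality again by Definition~\ref{def14}; the right-hand side does not involve $d$. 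This yields both assertions.

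I do not expect a real obstacle beyond standard Morita theory; the only thing to be careful about is the bookkeeping in the last step, namely that an additive equivalence carries the normal form $\smtr{1_L&0\\0&0}$ again to a normal form of the same shape, so that the output is once more a normal Munn algebra $\mM(F,r,m,n)$. If one prefers to avoid invoking equivalences abstractly, the same conclusion follows by direct computation from the matrix presentation \eqref{eq13}: writing $D$ for the endomorphism skewfield of $U$ (which by Schur's lemma and the Morita equivalence above may be identified with $F$), each entry of $\mM^1(R,L,M,N)=\smtr{\End_R L&\Hom_R(M,L)\\ \Hom_R(L,N)&\Bbbk+\Hom_R(M,N)}$ becomes a space of matrices over $D$ whose size depends only on $r,m,n$, with composition given by matrix multiplication, and comparison with the corresponding presentation of $\mM^1(F,r,m,n)$ gives the isomorphism. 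I would write up the first route.
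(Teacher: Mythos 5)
Your argument is correct and is essentially the paper's proof in functorial clothing: the paper simply notes that $\Hom_R(U^k,U^l)\simeq\Mat(l\times k,F)$ compatibly with composition and hence independently of $d$, which identifies $\mM(R,L,M,N)$ with $\Mat((r+n)\times(r+m),F)$ under the sandwich multiplication $a\cc b=a\mu b$ with $\mu=\smtr{I&0\\0&0}$; your transport of structure along the Morita equivalence $R\Mod\simeq F\Mod$ establishes exactly these isomorphisms, and your ``direct computation'' fallback is the paper's actual one-line proof. No gap.
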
 
  \begin{proof}
  Indeed, $\Hom_R(U^k,U^l)\simeq \Mat(l\xx k,F)$ does not depend on $d$ and \wrt such isomorphisms $\mM(R,L,M,N)=\Mat((r+n)\xx(r+m),F)$
  with the multiplication $a\cc b=a\mu b$, where $\mu=\smtr{I&0\\0&0}$ (of size $(r+m)\xx(r+n)$) and $I$ is the $r\xx r$ unit matrix.
  \end{proof} 
 
 \begin{theorem}\label{th111} 
  Let $\mM=\prod_{k=1}^s\mM(F_k,r_k,m_k,n_k)$, where $F_k$ are skewfields. Then $\mM$ is Morita equivalent to
  $\prod_{k=1}^s\mM(F_k,1,m_k,n_k)$. 
  \end{theorem}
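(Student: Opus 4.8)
The plan is to collapse the whole product into a single normal Munn algebra, apply Theorem~\ref{th17} once to shrink the ``$L$-slot'', and then split the product apart again. Writing $\mM(F_k,r_k,m_k,n_k)=\mM(F_k,F_k^{r_k}\+F_k^{m_k},F_k^{r_k}\+F_k^{n_k},\mu_k)$ with $\mu_k=\smtr{1&0\\0&0}$, Proposition~\ref{prop18} identifies $\mM$ with $\mM(R,L\+M,L\+N,\mu)$, where $R=\prod_kF_k$, $L=\bigoplus_kF_k^{r_k}$, $M=\bigoplus_kF_k^{m_k}$, $N=\bigoplus_kF_k^{n_k}$ and $\mu=\smtr{1_L&0\\0&0}$ (the grading by $k$ is respected throughout). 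Thus $\mM=\mM(R,L,M,N)$ is a normal Munn algebra over the semisimple ring $R$, so Theorem~\ref{th17} applies.

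Next I would compute $C=\End_RL$. Since $F_j$ annihilates $F_k^{r_k}$ for $j\ne k$, there are no cross-homomorphisms, so $C=\prod_kC_k$ with $C_k=\End_{F_k}(F_k^{r_k})$ --- a simple artinian ring, Morita equivalent to a skewfield $D_k$, whose (unique) simple module is $L_k:=F_k^{r_k}$. I would take $P_k$ to be the simple right $C_k$-module and $P=\bigoplus_kP_k$; this $P$ is a progenerator of $\Mdr C$. The one computation that matters is $P_k\*_{C_k}L_k$: writing $P_k\cong eC_k$ and $L_k\cong C_ke$ for a primitive idempotent $e\in C_k$, one gets $P_k\*_{C_k}L_k\cong eC_ke$, a $1$-dimensional $F_k$-module, hence isomorphic to $F_k^1$. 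Consequently $P\*_CL\cong\bigoplus_kF_k$ as an $R$-module.

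By Theorem~\ref{th17}, $\mM$ is Morita equivalent to $\mM(R,P\*_CL,M,N)=\mM(R,\bigoplus_kF_k,\bigoplus_kF_k^{m_k},\bigoplus_kF_k^{n_k})$, and reading Proposition~\ref{prop18} from right to left this is $\prod_k\mM(F_k,F_k,F_k^{m_k},F_k^{n_k})=\prod_k\mM(F_k,1,m_k,n_k)$. The only genuine content is the identity $P_k\*_{C_k}L_k\cong F_k^1$ --- that is, the classical Morita equivalence $\Mat(r_k,D_k)\sim F_k$ read off through the functor of Theorem~\ref{th17}; everything else is bookkeeping with Proposition~\ref{prop18}. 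One could instead argue factorwise, using Theorem~\ref{th17} to see that each $\mM(F_k,r_k,m_k,n_k)$ is Morita equivalent to $\mM(F_k,1,m_k,n_k)$ and then invoking compatibility of Morita equivalence with finite products; passing through the single Munn algebra is a bit cleaner because it sidesteps the subtlety recorded in Remark~\ref{rem19}.
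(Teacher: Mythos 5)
Your proposal is correct and follows essentially the same route as the paper: both pass through the single normal Munn algebra $\mM(R,L,M,N)$ with $R=\prod_kF_k$, take $P=\bigoplus_kP_k$ with $P_k$ the simple right module over $C_k=\End_RL_k\simeq\Mat(r_k,F_k)$, observe $P_k\*_{C_k}L_k\simeq F_k$, and apply Theorem~\ref{th17}. You merely spell out the idempotent computation $eC_k\*_{C_k}C_ke\simeq eC_ke$ and the use of Proposition~\ref{prop18} in both directions, which the paper leaves implicit.
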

 \begin{proof}
  Let $R=\prod_{k=1}^sF_k$, $L_k=F_k^{r_k}$ and $L=\prod_{k=1}^sL_k$. Then $C_k=\End_RL_k\simeq\Mat(r_k\xx r_k,F_k)$. Let $P_k$ be the 
  simple right $C_k$-module. It is a progenerator of the category $\Mdr C_k$ and $P_k\*_{C_k}L_k\simeq F_k$. Now apply Theorem~\ref{th17}.
 \end{proof}
  
  We denote the algebra $M(F,1,m,n)$ by $\mM(F,m,n)$. It is the algebra of $(n+1)\xx(m+1)$ matrices over $F$ with the multiplication
  $a\cc b=a\mu b$, where $\mu$ is the $(m+1)\xx(n+1)$ matrix with $1$ at the $(1,1)$-place and $0$ elsewhere.

 \section{Representations}
 \label{s2}

   In this section we consider  representations of finite dimensional regular Munn algebras over a field $\Bbbk$ with a semisimple base. According to 
   Theorem~\ref{th111}, such an algebra is Morita equivalent to a direct product $\mM=\prod_{k=1}^s\mM_k$, where 
   $\mM_k=\mM(F_k,m_k,n_k)$ and $F_k$ are skewfields. 
   If $m_k=n_k=0$, $\mM(F_k,m_k,n_k)=F_k$ and is a direct factor of $\mM^1$. So we can and will 
   suppose that there are no such components in $\mM$. The algebra $\mM_k$ contains an idempotent $e_k$ which is the
   $(n_k+1)\xx(m_k+1)$ matrix with $1$ at the $(1,1)$-place and $0$ elsewhere. Let $e_0=1-\sum_{k=1}^se_k$. Then,
   if $k\ne0$, $e_k\mM^1e_k=F_k$, $e_0\mM^1e_k=F_k^{n_k}$, $e_k\mM^1e_0=F_k^{m_k}$, 
   $e_0\mM^1e_0=\Bbbk+\bigoplus_{k=1}^sM_k$, where $M_k\simeq\Mat(n_k\xx m_k,F_k)$, and $e_k\mM^1e_l=0$ if $0\ne k\ne l\ne0$. 
   Choose an $F_k$-basis $\{a_{k1},a_{k2},\dots,a_{km_k}\}$ in each space $e_k\mM^1e_0$ and an $F_k$-basis $\{b_{k1},b_{k2},\dots,b_{kn_k}\}$
   in each space $e_0\mM^1e_k$. Then $a_{ki}b_{lj}=0$ for all $k,l,i,j$, $b_{ki}a_{lj}=0$ if $k\ne l$ and $\{b_{ki}a_{kj}\}$ is a basis of $M_k$.
   For every $\mM^1$-module $V$ set $V_k=e_kV\ (0\le k\le s)$. It is a vector space over $F_k$. The multiplication
   by $a_{ki}$ gives rise to a $\Bbbk$-linear map $\al_{ki}:V_0\to V_k$ and the multiplication by $b_{kj}$ gives rise to a $\Bbbk$-linear map
   $\be_{ki}:V_k\to V_0$. Since $\Hom_\Bbbk(V_0,V_k)\simeq\Hom_{F_k}(F_k\*_\Bbbk V_0,V_k)$ and
    $\Hom_\Bbbk(V_k,V_0)\simeq\Hom_{F_k}(V_k,\Hom_\Bbbk(F_k,V_0))$, both $\al$ and $\be$ can be considered as matrices over $F_k$
    of appropriate sizes. So $V$ is defined by the set of maps (or of matrices) $\{\al_{ki},\be_{lj}\}$ such that $\al_{ki}\be_{lj}=0$ for all $k,l,i,j$. 
    We present it by the diagram
    \[
      V:\   \xymatrix{  \{ V_k\}\,{\stackrel0\circlearrowleft}  \ar@<-.5ex>[rr]_{\{\be_{kj}\}} && V_0, \ar@<-.5ex>[ll]_{\{\al_{ki}\}}  }
    \]
    
   A homomorphism $\phi:V\to V'$ is given by a set of $F_k$-linear maps $\phi_k:V_k\to V'_k\ (0\le k\le s)$, where $F_0=\Bbbk$,
    such that $\phi_k\al_{ki}=\al'_{ki}\phi_0$ and $\phi_0\be_{kj}=\be'_{kj}\phi_k$, i.e. the following diagram is commutative:
    \begin{equation}\label{eq22} 
      \vcenter{  \xymatrix{  \{ V_k\}\,{\stackrel0\circlearrowleft} \ar@<-.5ex>[rr]_{\{\be_{kj}\}} \ar[d]_{\{\phi_k\}} && V_0 \ar@<-.5ex>[ll]_{\{\al_{ki}\}}  \ar[d]^{\phi_0}	\\ 
   									\{ V'_k\}\,{\stackrel0\circlearrowleft}  \ar@<-.5ex>[rr]_{\{\be'_{kj}\}}  && V'_0 \ar@<-.5ex>[ll]_{\{\al'_{ki}\}}  	 }    }
    \end{equation}
   $\phi$ is an isomorphism \iff so are all $\phi_k$. 
   
   Set $V_+=\sum_{l,j}\im\be_{lj}\sbe V_0,\ V_-=V_0/V_+$. Then $\al_{ki}(V_+)=0$. Hence $\al_{ki}$ can be considered as a map $V_-\to V_k$
   and we obtain a diagram
   \[
   \tiv:\ \vcenter{    \xymatrix@R=1ex{ && V_- \ar[dll]_{\{\al_{ki}\}} \\ \{V_k\} \ar[drr]_{\{\be_{kj}\}} \\ && V_+	  } }
   \]
   with the condition $\sum_{k,j}\im\be_{kj}=V_+$.  Such diagram can be considered as a representation of the \emph{realization 
   $(\dM,\Om)$ of the valued graph} $(\Ga,d)$ in the sense of \cite{DR}. Namely the vertices of the graph $\Ga$ are 
   $\{+,-,1,2,\dots,s\}$, $d_k=\dim_\Bbbk F_k$, $d_{k+}=(m_k,m_kd_k)$, $d_{-k}=(n_kd_k,n_k)$ and $d_{ij}=0$ otherwise.
   The \emph{orientation} $\Om$ of the edge $\{k,+\}$ is $\,k\to+\,$ and that of the edge $\{-,k\}$ is $\,-\to k$. The \emph{modulation} $\dM$
   of $\Ga$ is given by the algebras $F_k$ and $F_\pm=\Bbbk$, $F_k\mbox{-}F_-$-bimodules ${_kM_-}=m_kF_k$ and
   $F_+\mbox{-}F_k$-bimodules ${_+M_k}=n_kF_k$. Thus a representation of this realization is indeed given by a set of 
   $F_k$-vector spaces $V_k$, $F_0$-vector spaces $V_{\pm}$ and a set of linear maps $\tal_k:n_kV_-\to V_k$ and $\tbe_l:m_lV_l\to V_+$.
   There components are just $\al_{ki}$ and $\be_{lj}$. 
   
  \begin{theorem}\label{th21} 
  Let $\Rep^+(\dM,\Om)$ be the full subcategory of the category of representations of $(\dM,\Om)$ such that 
  $\sum_{l=1}^s\im\tbe_l=V_+$ and $\bigcap_{k}\ker\tal_k=0$.
     Let also $\mM\Mod^+$ be the full subcategory of $\mM\Mod$ consisting of such modules $V$ that 
     $\sum_{l,j}\im\be_{lj}=\bigcap_{k,i}\ker\al_{ki}$. Denote by $\kI$ the ideal of the category $\mM\Mod^+$ consisting of all morphisms
     $\phi:V\to V'$ such that $\phi_k=0$ for $k\ne0$, $\phi_0(V_+)=0$ and $\im\phi_0\sbe V'_+$.    
     Then $\mM\Mod^+/\kI\simeq\Rep^+(\dM,\Om)$ and $\kI^2=0$.
 \end{theorem}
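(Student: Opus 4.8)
The plan is to establish the equivalence through the evident functor $\Phi\colon\mM\Mod^+\to\Rep^+(\dM,\Om)$ sending a module $V$ to the diagram $\tiv$ constructed just before the theorem: $\Phi(V)_\pm=V_\pm$, $\Phi(V)_k=V_k$ for $1\le k\le s$, with structure maps $\tal_{ki}\colon V_-\to V_k$ induced by $\al_{ki}$ (well defined since $\al_{ki}(V_+)=0$) and $\tbe_{lj}=\be_{lj}\colon V_l\to V_+$; on a morphism $\phi$ it returns $\tph_k=\phi_k$, $\tph_+=\phi_0|_{V_+}$ (which lands in $V'_+$ because $\phi_0\be_{lj}=\be'_{lj}\phi_l$), and $\tph_-\colon V_-\to V'_-$ induced by $\phi_0$. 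First I would check that $\Phi$ takes values in $\Rep^+(\dM,\Om)$: for $V\in\mM\Mod^+$ the equality $\sum_{l,j}\im\be_{lj}=V_+$ is the definition of $V_+$, so $\sum_l\im\tbe_l=V_+$; and since $V_+\sbe\ker\al_{ki}$ one has $\ker\tal_{ki}=\ker\al_{ki}/V_+$, whence $\bigcap_{k,i}\ker\tal_{ki}=\big(\bigcap_{k,i}\ker\al_{ki}\big)/V_+=0$ by the defining condition of $\mM\Mod^+$. Unwinding the definitions, $\Phi(\phi)=0$ \iff $\phi\in\kI$; in particular $\Phi$ kills $\kI$ and induces a functor $\bar\Phi\colon\mM\Mod^+/\kI\to\Rep^+(\dM,\Om)$, which is therefore faithful.

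The second step is the purely formal part: $\kI$ is an ideal and $\kI^2=0$. The one fact used repeatedly is that every morphism $\psi\colon V\to V'$ in $\mM\Mod$ satisfies $\psi_0(V_+)\sbe V'_+$, because $\psi_0\be_{lj}=\be'_{lj}\psi_l$. Then, for $\phi\in\kI$ and composable $\chi,\psi$ in $\mM\Mod^+$, the composite $\psi\phi\chi$ has zero components at $k\ne0$, carries $W_+$ via $\chi_0$ into $V_+$ and thence to $0$ under $\phi_0$, and has image inside $\psi_0(\im\phi_0)\sbe\psi_0(V'_+)\sbe W'_+$, so $\psi\phi\chi\in\kI$. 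And if $\phi,\psi\in\kI$, then $\im\phi_0\sbe V'_+$ while $\psi_0(V'_+)=0$ (part of $\psi\in\kI$), so $(\psi\phi)_0=\psi_0\phi_0=0$ and $\psi\phi=0$.

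It remains to show $\bar\Phi$ is dense and full. For density, from a representation with data $V_\pm,V_k,\tal_{ki},\tbe_{lj}$ satisfying the two conditions of $\Rep^+(\dM,\Om)$, I would build $W\in\mM^1\Mod$ by setting $W_0=V_+\+V_-$, $W_k=V_k$, taking $\be_{lj}$ to be $\tbe_{lj}$ followed by the inclusion $V_+\hookrightarrow W_0$, and $\al_{ki}$ to be the projection $W_0\tto V_-$ followed by $\tal_{ki}$. Then $\al_{ki}\be_{lj}=0$ because $\im\be_{lj}\sbe V_+$, so $W$ is a genuine module; $\sum_{l,j}\im\be_{lj}=V_+$ and $\bigcap_{k,i}\ker\al_{ki}=V_+\+\bigcap_{k,i}\ker\tal_{ki}=V_+$ hold by the two conditions, so $W\in\mM\Mod^+$; and $\Phi(W)$ is visibly isomorphic to the given representation.

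Fullness is the one step requiring a small choice. Given $\tps\colon\tiv\to\widetilde{V'}$ in $\Rep^+(\dM,\Om)$, I must lift it to $\phi\colon V\to V'$ with $\Phi(\phi)=\tps$. Put $\phi_k=\tps_k$ for $1\le k\le s$. For $\phi_0$, using that $\Bbbk$ is a field, choose $\Bbbk$-linear sections $\sigma\colon V_-\to V_0$, $\sigma'\colon V'_-\to V'_0$ of the canonical projections and define $\phi_0$ by $\phi_0|_{V_+}=\tps_+$ and $\phi_0\cir\sigma=\sigma'\cir\tps_-$ (legitimate since $V_0=V_+\+\sigma(V_-)$). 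One then checks $\phi_k\al_{ki}=\al'_{ki}\phi_0$ and $\phi_0\be_{lj}=\be'_{lj}\phi_l$: on $V_+$ the first reads $0=0$, since $\al_{ki}|_{V_+}=0$ and $\al'_{ki}$ kills $\im\tps_+\sbe V'_+$, while on $\sigma(V_-)$ it becomes $\tps_k\tal_{ki}=\tal'_{ki}\tps_-$; and the second reduces, since $\im\be_{lj}\sbe V_+$, to $\tps_+\tbe_{lj}=\tbe'_{lj}\tps_l$. Both identities hold because $\tps$ is a morphism of representations, so $\phi$ is a morphism in $\mM\Mod^+$ with $\Phi(\phi)=\tps$, proving $\bar\Phi$ full, hence an equivalence. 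I expect the only mild nuisance to be the bookkeeping with the sections $\sigma,\sigma'$ in this last step; all the rest is a direct reading off of the definitions.
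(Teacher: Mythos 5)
Your proposal is correct and follows essentially the same route as the paper: the same functor $V\mapsto\tiv$, the same observation that $\Phi(\phi)=0$ \iff $\phi\in\kI$ (giving $\kI^2=0$), and the same construction $W_0=W_+\+W_-$ for essential surjectivity. The only cosmetic difference is that you verify fullness directly by choosing splittings of $V_0\tto V_-$, whereas the paper packages the same computation as an explicit quasi-inverse functor $\Psi$.
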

 \begin{proof}
    We have already constructed, for any $\mM$-module $V$, the representation $\tilde{V}$. By definition, $\tilde{V}\in\Rep^+(\dM,\Om)$.
   Given a homomorphism $\phi=\{\phi_k\}:V\to V'$ as in \eqref{eq22}, we obtain linear maps $\phi_+:V_+\to V'_+$ and $\phi_-:V_-\to V'_-$
   such that together with the maps $\phi_k$ they give a morphism $\tph:\tiv\to\tiv'$. Obviously, $\tph=0$ \iff $\phi\in\kI$.
   Thus we obtain a functor $\Phi:\mM\Mod^+/\kI\to\Rep^+(\dM,\Om)$.  Obviously $\kI^2=0$.
   
   Let $W=(W_k,W_+,W_-,\al_k,\be_k \mid 1\le k\le s)$ be a representation from $\Rep^+(\dM,\Om)$.  Set $\tiw_0=W_+\+W_-$, 
    take for $\tal_{ki}:W_0\to W_k$ the maps that are $0$ on $W_+$ and coincide with the components of $\al_k$ on $W_-$, and take 
    for $\tbe_{lj}:W_l\to \tiw_0$ the components of $\be_l:W_l\to W_+$. It defines an $\mM$-module $\tiw\in\mM\Mod^+$. If
    $\psi:W\to W'$ is a morphism of representations, set $\tps(w)=\psi_+(w_+)+\psi_-(w_-)$ if $w=w_++w_-$, where $w_\pm\in W_\pm$.
    It gives a homomorphism $\tps:\tiw\to \tiw'$. Taking its class modulo $\kI$, we obtain a functor $\Psi:\Rep^+(\dM,\Om)\to\mM\Mod^+/\kI$.
    One easily verifies that this functor is quasi-inverse to $\Phi$.
      \end{proof}
       
   \begin{remark}\label{rem22} 
     Since $\kI^2=0$, the isomorphism classes of objects in $\mM\Mod^+$ are the same as in $\mM\Mod^+/\kI$.
         The only indecomposable representations not belonging to $\Rep^+(\dM,\Om)$ are two \emph{trivial representations} such that $V_+=\Bbbk$
         (or $V_-=\Bbbk$) and $V_k=0$ for $k\ne+$ (respectively, for $k\ne-$). 
         The only indecomposable $\mM$-module not belonging to $\mM\Mod^+$ is the $1$-dimensional vector space
   with zero multiplication by the elements of $\mM$. Therefore, the \emph{representation type} of the algebra $\mM$ (finite, tame or wild) 
   is the same as that of the realization $(\dM,\Om)$ of the valued graph $\Ga$.
      \end{remark}  
       
   It is proved in \cite{DR} that the representation type of $(\dM,\Om)$ actually only depends on the valued graph itself. 
   Namely, it is representation finite \iff all its connected components are \emph{Dynkin graphs} and representation tame
    \iff all of them are Dynkin or \emph{Euclidean} (\emph{extended Dynkin}) graphs and at least one Euclidean graph occurs. 
    For the list of these graphs see \cite[p.\,3]{DR}. In all other cases it is representation wild.
   
   Taking into account the construction of the valued graph $\Ga$ from the algebra $\mM$, we can establish the representation type of any
   finite dimensional Munn algebra with a semisimple base. Actually it only depends on the set of triples $\{(d_k,m_k,n_k)\}$, where $d_k=\dim_\Bbbk F_k$. 
   We use the following notations:   
   \begin{align*}
    & \dT(d_1,\dots,d_r\mid d_{r+1},\dots,d_s)=\\
    &\ =\{(d_1,1,0),\dots,(d_r,1,0),(d_{r+1},0,1),\dots,(d_s,0,1)\},\\
    \intertext{and, for $\dT=\dT(d_1,\dots,d_r\mid d_{r+1},\dots,d_s)$,}
    & S^-(\dT)={\sum}_{k=1}^rd_k,\\
    & S^+(\dT)={\sum}_{k=r+1}^sd_k,\\
    & S(\dT)=S^-(\dT)+S^+(\dT).
   \end{align*}
   Certainly, maybe $r=0$ or $r=s$.
 
 \begin{theorem}\label{th23} 
 Let $\mM=\prod_{k=1}^s\mM(F_k,m_k,n_k)$, $\dT=\{(d_k,m_k,n_k)\mid (m_k,n_k)\ne(0,0)\}$, where $d_k=\dim_\Bbbk F_k$.
 \begin{enumerate}
 \item\hspace*{-1ex}%
  \footnote{\,If the field $\Bbbk$ is algebraically closed, hence all $d_k=1$, this result coincides with that of Ponizovski{\u\i} \cite[n$^\circ$\,5]{Poniz}.}
   $\mM$ is representation finite \iff $\dT=\dT_0\cup\dT_1$, where $\dT_0=\dT(d_1,\dots,d_r\mid d_{r+1},\dots,d_s)$ for
 some $d_k$ and
	\begin{enumerate}
 		\item  either $\dT_1=\0$ and  $\max\{S^-(\dT_0),S^+(\dT_0)\}\le3$
 		\item  or $\dT_1=\{(1,1,1)\}$, $S(\dT_0)\le3$ and $\max\{S^-(\dT_0),S^+(\dT_0)\}\le2$.
 		\end{enumerate}
 	\smallskip
 \item  $\mM$ is representation tame \iff $\dT=\dT_0\cup\dT_1$, where 
 $\dT_0=\dT(d_1,\dots,d_r\mid d_{r+1},\dots,d_s)$ for some $r$ and $d_k$, and
		\begin{enumerate}
		 \item either $\dT_0=\0$ and $\dT$ is one of the sets $$\{(1,1,1),(1,1,1)\},\,\{(2,1,1)\},\,\{(1,2,0)\},\,\{(1,0,2)\},$$
		 \item  or $\dT_1=\0$ and $\max\{S^-(\dT_0),S^+(\dT_0)\}=4$,
		 \item  or $\dT_1=\{(1,1,1)\}$ and $S^-(\dT_0)=S^+(\dT_0)=2$.
		 \end{enumerate}
  \smallskip
  \item  In all other cases $\mM$ is representation wild.
 \end{enumerate}	  
 
  \end{theorem}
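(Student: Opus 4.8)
The plan is to derive Theorem~\ref{th23} from Remark~\ref{rem22} together with the Dlab--Ringel classification. By Remark~\ref{rem22} the representation type of $\mM$ equals that of the realization $(\dM,\Om)$, which by \cite{DR} is finite exactly when every connected component of the underlying valued graph $\Ga$ is a Dynkin graph, tame exactly when every component is Dynkin or Euclidean and at least one Euclidean component occurs, and wild otherwise. So it suffices to read off the shape of $\Ga$ from $\dT$. Recall the description of $\Ga$ preceding Theorem~\ref{th21}: apart from the two special vertices $+$ and $-$ it has one vertex $k$ per triple of $\dT$; vertex $k$ is joined to $+$ iff $m_k\ne0$ and to $-$ iff $n_k\ne0$, the valuation of each such edge being determined by the pair $(m_k,d_k)$, resp.\ $(n_k,d_k)$. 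Hence $+$ and $-$ lie in one component exactly when $\dT$ contains a triple $(d_k,1,1)$; a triple $(d_k,1,0)$ (resp.\ $(d_k,0,1)$), i.e.\ a triple of $\dT_0$, contributes a single ``leg'' of weight $d_k$ at $+$ (resp.\ at $-$); and a triple with $m_k\ge2$ or $n_k\ge2$ contributes a ``thick'' edge.

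I would organize the case analysis around the Tits form $q_\Ga$, recalling from \cite{DR} that $\Ga$ is Dynkin iff $q_\Ga$ is positive definite and Euclidean iff $q_\Ga$ is positive semidefinite with one--dimensional radical. First, dispose of the thick edges: for a component containing an edge with the valuation coming from $m_k\ge2$, completing the square in the variable $x_k$ shows that $q_\Ga$ is already indefinite unless $m_k=2$, $d_k=1$ and the vertex $k$ has no other neighbour, in which case that component is $\widetilde A_{11}$ (Euclidean); matching this against the possible remaining components gives the entries $\{(1,2,0)\}$ and $\{(1,0,2)\}$ of case~(2a), and every other occurrence of a triple with $m_k\ge2$ or $n_k\ge2$ forces a wild component. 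From here on one may assume $\dT=\dT_0\cup\dT_1$ with $\dT_1$ the set of triples $(d_k,1,1)$.

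The heart of the argument is then the analysis of these ``plain'' graphs. When $\dT_1=\0$ the graph $\Ga$ is the disjoint union of the star at $+$ with leg weights $\{d_k\mid(d_k,1,0)\in\dT\}$, the star at $-$ with leg weights $\{d_k\mid(d_k,0,1)\in\dT\}$, and, if one of those weight sets is empty, an isolated vertex (a copy of $A_1$). For a star with total leg weight $w$ one computes $q=x_c^2+\sum w_ix_i^2-\sum w_ix_cx_i$, which is positive definite iff $w\le3$ and positive semidefinite with one--dimensional radical iff $w=4$; this yields~(1a) and~(2b). When $\dT_1=\{(d_0,1,1)\}$ consists of a single triple, $\Ga$ is the corresponding double star through a central vertex; completing the squares over all legs reduces $q_\Ga$ to a binary form in the two variables at $+$ and $-$ whose type is governed by $S^-(\dT_0)+d_0$, $S^+(\dT_0)+d_0$ and $d_0$. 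For $d_0=1$ this gives exactly the conditions $S(\dT_0)\le3$, $\max\{S^-(\dT_0),S^+(\dT_0)\}\le2$ of~(1b) in the definite range and $S^-(\dT_0)=S^+(\dT_0)=2$ of~(2c) in the semidefinite range; for $d_0=2$ the only non--wild possibility is $\dT_0=\0$, the sporadic $\{(2,1,1)\}$; and for $d_0\ge3$ the form is always indefinite. Finally, if $\dT_1$ contains at least two triples the vertices $+$ and $-$ are joined by two or more internally disjoint paths, and here the only non--wild case is $\{(1,1,1),(1,1,1)\}$, for which $\Ga=\widetilde A_3$. Collecting everything reproduces the lists~(1) and~(2), and whatever is not listed is wild, which is~(3).

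The bulk of the work, and the step I expect to be most error--prone, is this last combinatorial sweep: one has to make sure that every valued graph that can occur as $\Ga$ (plain stars, double stars, and the sporadic graphs $\widetilde A_{11}$, $\widetilde C_2$, $\widetilde A_3$) has been matched correctly against the Dlab--Ringel list on \cite[p.\,3]{DR}, and that the numerical thresholds in the statement are tight. Passing everything through $q_\Ga$ turns each individual verification into routine linear algebra, but the number of configurations---in particular the asymmetric $B$--, $C$--, $F$-- and $G$--type valuations produced by $d_k\in\{2,3\}$---makes careful bookkeeping the real obstacle.
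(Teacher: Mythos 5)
Your overall route is the same as the paper's: reduce to the valued graph $\Ga$ via Theorem~\ref{th21}/Remark~\ref{rem22} and then invoke the Dlab--Ringel trichotomy. The only methodological difference is that the paper simply names, case by case, which graphs from the list in \cite[p.\,3]{DR} occur ($A_2,A_3,D_4,B_2,B_3$; $A_3,\dots,B_5$; $\tilde A_3,\tilde B_2,\tilde A_{12}$; etc.), whereas you test membership in that list by positive (semi)definiteness of the Tits form. That is an equivalent criterion and your individual computations (single valued edge $(m_k,m_kd_k)$ with $m_k^2d_k\lessgtr4$; stars with total leg weight $\le3$, $=4$; the double star through a $(d_0,1,1)$-vertex reducing to a binary form in $x_+,x_-$; the $\tilde A_3$ cycle for two triples $(1,1,1)$) all check out. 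Minor point: in Dlab--Ringel's labelling the single edge with valuation $(2,2)$ is $\tilde A_{12}$, not $\tilde A_{11}$ (the latter is the $(1,4)$-edge, which here arises from a single leg with $d_k=4$); this does not affect the argument.

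There is, however, one step that fails as written: the claim that ``every other occurrence of a triple with $m_k\ge2$ or $n_k\ge2$ forces a wild component,'' i.e.\ that a triple $(1,2,0)$ can only yield a non-wild algebra when it is the \emph{whole} of $\dT$. A triple $(1,2,0)$ attaches vertex $k$ only to $+$, so its $\tilde A_{12}$-component is disjoint from anything hanging off $-$; for instance $\dT=\{(1,2,0),(1,0,1)\}$ gives $\Ga=\tilde A_{12}\sqcup A_2$, and $\dT=\{(1,2,0),(1,0,2)\}$ gives $\tilde A_{12}\sqcup\tilde A_{12}$, both tame by the very criterion you are using. Your own quadratic-form computation, carried out honestly, would detect this (the form is a direct sum over components), so the assertion is not a harmless shortcut but an actual contradiction with your method. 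To be fair, the same configurations are absent from case (2a) of the theorem and from the paper's own proof (whose step ``(3) in all other cases $\Ga$ is not a disjoint union of Dynkin and Euclidean graphs'' fails for these $\dT$), so you have reproduced a gap rather than introduced one; but a correct write-up along your lines must either enlarge case (2a) to allow a lone $\tilde A_{12}$-component together with Dynkin or Euclidean star components at the opposite sign, or explain why such $\dT$ are excluded.
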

  \begin{proof}
  (1a) In this case the graph $\Ga$ is a disjoint union of  $2$ graphs of the types $A_2,A_3,D_4,B_2$ or $B_3$.
  
  (1b) In this case $\Ga$ is of one of the types $A_3,A_4,A_5,D_5,D_6,B_4$ or $B_5$.
  
  In other cases $\Ga$ is not a disjoint union of Dynkin graphs.
  
  From now on we only list the cases when $\mM$ is not representation finite.
  
  (2a) In these cases $\Ga$ is, respectively, of type $\tilde{A}_3$, or $\tilde{B}_2$, or $\tilde{A}_{12}$.
  
  (2b) In this case $\Ga$ is a disjoint union of two graphs, where either both are of types $\tilde{D}_4,\tilde{BD}_3,\tilde{B}_2,\tilde{A}_{11}$ or $\tilde{G}_2$
  or one is of one of these types while the other is of a type cited in case (1a). 
  
  (2c) In this case $\Ga$ is of type $\tilde{D}_6,\tilde{BD}_5$ or $\tilde{B}_4$.
   
   (3) In all other cases the graph $\Ga$ is not a disjoint union of Dynkin and Euclidean graphs.
  \end{proof}

 \section{Semigroups}
 \label{s3}

 We apply the obtained result to representations of finite \emph{Rees matrix semigroups}. Recall 
  \cite[\S3.1]{ClifPres1} that such semigroup $\kM(G,p,q,\mu)$ is given by a finite group $G$ and a matrix $\mu$ of size $p\xx q$
 with coefficients from the group $G$.
  The elements of $\kM(G,p,q,\mu)$ are $q\xx p$ matrices with coefficients from $G^0=G\sqcup\{0\}$ containing at most one non-zero 
  element and the multiplication is defined by the rule $a\cc b=a\mu b$. If the sandwich matrix $\mu$ is \emph{regular}, i.e. 
  every column and every row of $\mu$ contains a non-zero element, the semigroup $\kM(G,p,q,\mu)$ is \emph{$0$-simple} 
  (hence \emph{completely $0$-simple}) and every finite $0$-simple semigroup is isomorphic to a Rees matrix semigroup with a 
  regular sandwich matrix \cite[Th.3.5]{ClifPres1}. We always suppose that the matrix $\mu$ is non-zero; otherwise $\kM(G,p,q,\mu)$
  is just a semigroup with zero multiplication.
  
  Let $\Bbbk$ be a field, $R=\Bbbk G$ and $\kM=\kM(G,p,q,\mu)$. Obviously, $\Bbbk\kM=\mM(R,R^p,R^q,\mu)$, where $\mu$ is 
  considered as an element of $\Mat(p\xx q,R)$ and is identified with an $R$-homomorphism $R^q\to R^p$.
  We suppose that $\chr\Bbbk\nmid \#(G)$. Then $R$ is semisimple. Namely,
  let $U_1,U_2,\dots,U_s$ be all irreducible representations of $G$ over $\Bbbk$, $F_k=\End_GU_k$, $d_k=\dim_\Bbbk F_k$ and 
  $u_k=\dim_\Bbbk U_k$. Set $c_k=\frac{u_k}{d_k}$. Then $R\simeq\prod_{k=1}^sR_k$, where $R_k=\Mat(c_k\xx c_k,F_k)$,  and
  $\Mat(p\xx q,R_k)= \Mat(pc_k\xx qc_k,F_k)$.
  Denote by $\mu_k$ the projection of $\mu$ onto $\Mat(pc_k\xx qc_k,F_k)$ and set $r_k=\rk\mu_k$. 
   As $\mu\ne0$, also all $\mu_k\ne0$ and the Munn algebra $\Bbbk\kM$ is regular.
  Then $\Bbbk\kM\simeq\prod_{k=1}^s\mM(F_k,r_k,m_k,n_k)$, where $m_k=pc_k-r_k$ and $n_k=qc_k-r_k$. 
 
  Theorem~\ref{th111} now implies the following result.
  
  \begin{corollary}\label{cor31} 
  $\Bbbk\kM$ is Morita equivalent to $\prod_{k=1}^s\mM(F_k,m_k,n_k)$.
    \end{corollary}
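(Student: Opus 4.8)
The plan is to feed the decomposition of $\Bbbk\kM$ obtained in the paragraph just preceding the corollary into Theorem~\ref{th111}; the proof is then essentially a two-line deduction, and the only work is to check that the hypotheses of that theorem are met.

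First I would recall the reduction carried out above the corollary. Since $\chr\Bbbk\nmid\#(G)$, the group algebra $R=\Bbbk G$ is semisimple with Wedderburn decomposition $R\simeq\prod_{k=1}^sR_k$, $R_k=\Mat(c_k\xx c_k,F_k)$, and $\Bbbk\kM=\mM(R,R^p,R^q,\mu)$. By Proposition~\ref{prop18} this algebra splits as $\prod_{k=1}^s\mM(R_k,R_k^p,R_k^q,\mu_k)$, a product of Munn algebras based on the simple algebras $R_k$. Each factor is regular, so Proposition~\ref{prop13} puts $\mu_k$ in normal form and, together with Proposition~\ref{prop110}, identifies $\mM(R_k,R_k^p,R_k^q,\mu_k)$ with $\mM(F_k,r_k,m_k,n_k)$, where $r_k=\rk\mu_k$, $m_k=pc_k-r_k$, $n_k=qc_k-r_k$. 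Hence $\Bbbk\kM\simeq\prod_{k=1}^s\mM(F_k,r_k,m_k,n_k)$.

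Next I would verify the one point genuinely needed to apply Theorem~\ref{th111}, namely that $r_k\ge1$ for every $k$. This is where the only real content of the argument lies: it uses that the entries of the sandwich matrix $\mu$ belong to $G^0$, so a nonzero entry is a group element whose image in $R_k$ under the $k$-th irreducible representation is an invertible block, hence nonzero. Therefore $\mu\ne0$ forces every $\mu_k\ne0$, and so $r_k=\rk\mu_k\ge1$. With this in hand, Theorem~\ref{th111} applies and shows that $\prod_{k=1}^s\mM(F_k,r_k,m_k,n_k)$ is Morita equivalent to $\prod_{k=1}^s\mM(F_k,1,m_k,n_k)$; since $\mM(F_k,m_k,n_k)$ is by definition $\mM(F_k,1,m_k,n_k)$, composing with the isomorphism of the previous step completes the proof. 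The main, and really the only, obstacle is the verification that all $r_k\ge1$ — everything else is bookkeeping already done in the text.
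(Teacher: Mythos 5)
Your proposal is correct and takes essentially the same route as the paper: decompose $\Bbbk\kM=\mM(R,R^p,R^q,\mu)$ along the Wedderburn factors of $R=\Bbbk G$, normalize each factor to $\mM(F_k,r_k,m_k,n_k)$, and apply Theorem~\ref{th111}. The paper compresses the key point you highlight into the single sentence ``As $\mu\ne0$, also all $\mu_k\ne0$,'' and your justification (a nonzero sandwich entry is a group element, hence invertible in each block $R_k$) is exactly the reason this holds.
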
  
    
  \begin{remark}\label{rem32} 
   Note that $c_k\mid m_k-n_k$ and $\frac{m_k-n_k}{c_k}=p-q$ does not depend on $k$. In particular, if $m_k=n_k$, or $m_k>n_k$, or $m_k<n_k$
   for some $k$, the same holds for all $k$.
  \end{remark}
 
   From Corollary~\ref{cor31} and Theorem~\ref{th23}, taking into account Remark~\ref{rem32}, we obtain a classification of 
   representation types of Rees matrix semigroups, in particular, of $0$-simple semigroups. In the next theorem we use the 
   just introduced notations.
   
   \begin{theorem}\label{th33} 
    Let $\kM=\kM(G,p,q,\mu)$ be a finite Rees matrix semigroup, $\Bbbk$ be a field such that 
    $\chr\Bbbk\nmid \#(G)$. Set $\dT(\kM)=\{(d_k,m_k,n_k)\mid (m_k,n_k)\ne(0,0)\}$.
    \begin{enumerate}
    \item\hspace*{-1ex}%
  \footnote{\,If the field $\Bbbk$ is algebraically closed, hence all $d_k=1$, this result was proved by Ponizovski{\u\i} \cite{Poniz}.}
    $\kM$ is representation finite over the field $\Bbbk$ \iff
    		\begin{enumerate}
    		\item either $\dT=\{(1,1,1)\}$
    		\item or $\,\#(G)\le 3$ and $\dT$ contains either only triples $(d_k,1,0)$ or only triples $(d_k,0,1)$.
    		\end{enumerate}
    \item $\kM$ is representation tame over the field $\Bbbk$ \iff 
    		\begin{enumerate}
    			\item either $\dT(\kM)=\{(1,1,1),(1,1,1)\}$, or $\dT(\kM)=\{(2,1,1)\}$,
    			\item  or $\,\#(G)=4$ and  $\dT(\kM)$ contains either only triples $(d_k,1,0)$ or only triples $(d_k,0,1)$,
    			\item  $G=\{1\}$ and $\dT(\kM)=\{(1,2,0)\}$ or $\dT(\kM)=\{(1,0,2)\}$.
    		\end{enumerate}
    		\smallskip
    \item In all other cases $\kM$ is representation wild over the field $\Bbbk$.
    \end{enumerate} 
    	Note that in cases (1a) and (2a) $p=q$, while in cases (1b) and (2b) the group $G$ is commutative.
   \end{theorem}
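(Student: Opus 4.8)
The plan is to reduce Theorem~\ref{th33} to the classification of Munn algebras already obtained in Theorem~\ref{th23}. By Corollary~\ref{cor31} the representation type of $\kM$ coincides with that of $\prod_{k=1}^s\mM(F_k,m_k,n_k)$, so Theorem~\ref{th23} applies verbatim to the set of triples $\dT(\kM)=\{(d_k,m_k,n_k)\mid(m_k,n_k)\ne(0,0)\}$. The entire content of the proof is then to match its six alternatives with the five alternatives stated here, using Remark~\ref{rem32} to discard the combinations of triples that cannot occur for a Rees matrix semigroup.

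First I would extract the sign information from Remark~\ref{rem32}. Since $m_k-n_k=(p-q)c_k$, the sign of $m_k-n_k$ is independent of $k$; consequently a triple of type $(d_k,1,0)$ (where $m_k-n_k>0$), one of type $(d_k,0,1)$ (where $m_k-n_k<0$), and the triple $(1,1,1)$ (where $m_k-n_k=0$) can never occur together in $\dT(\kM)$. Hence the sub-family $\dT_0$ of Theorem~\ref{th23} is always \emph{pure}: either all its triples are $(d_k,1,0)$ or all are $(d_k,0,1)$; and as soon as $(1,1,1)$ appears one has $p=q$, which forces $\dT_0=\0$. This immediately kills case (2c) of Theorem~\ref{th23} (where $\dT_1=\{(1,1,1)\}$ while $S^-(\dT_0)=S^+(\dT_0)=2\ne0$) and reduces its case (1b) to $\dT(\kM)=\{(1,1,1)\}$, which is case (1a) here; likewise in case (2a) of Theorem~\ref{th23} only the possibilities $\{(1,1,1),(1,1,1)\}$, $\{(2,1,1)\}$, $\{(1,2,0)\}$ and $\{(1,0,2)\}$ with $\dT_0=\0$ remain to be analysed.

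Next I would do the arithmetic of the pure case. If every triple of $\dT(\kM)$ has the form $(d_k,1,0)$, then $m_k=pc_k-r_k=1$ and $n_k=qc_k-r_k=0$, so $(p-q)c_k=1$, whence $c_k=1$ and $p=q+1$ for each such $k$; a component with $(m_k,n_k)=(0,0)$ would give $p=q$, which is excluded, so \emph{every} simple component of $\Bbbk G$ already occurs in $\dT(\kM)$ and has $c_k=1$. Therefore $\#(G)=\sum_kc_k^2d_k=\sum_{k\in\dT(\kM)}d_k=S^-(\dT_0)$ (and symmetrically $S^+(\dT_0)$ if all triples are $(d_k,0,1)$). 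With this identification the numerical conditions of Theorem~\ref{th23} translate as claimed: its case (1a) becomes ``$\#(G)\le3$ and $\dT(\kM)$ pure'' $=$ case (1b) here, and its case (2b) becomes ``$\#(G)=4$ and $\dT(\kM)$ pure'' $=$ case (2b) here. It remains to treat $\{(1,2,0)\}$ and $\{(1,0,2)\}$: here $\Bbbk G$ consists of a single simple component, necessarily the one afforded by the trivial representation, so $\Bbbk G=\Bbbk$ and $G=\{1\}$, which is case (2c) here; while $\{(1,1,1),(1,1,1)\}$ and $\{(2,1,1)\}$ force $p=q$ and give case (2a) here. Everything left over is case (3) of Theorem~\ref{th23}, i.e. the wild case (3) here. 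The concluding remark is then automatic: in cases (1a) and (2a) all triples satisfy $m_k=n_k$, so $p=q$ by Remark~\ref{rem32}; in cases (1b) and (2b) we have $\#(G)\le4$, and every group of order at most $4$ is commutative.

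I expect the main point to be the pure-case arithmetic of the third paragraph: one must observe that purity of $\dT(\kM)$ forces $c_k=1$ uniformly and that no simple component of $\Bbbk G$ can be absent from $\dT(\kM)$, which is exactly what turns the abstract sums $S^{\pm}(\dT_0)$ of Theorem~\ref{th23} into the plain group order $\#(G)$. Once that and the sign observation from Remark~\ref{rem32} are in place, the matching of the alternatives — together with the verification that the excluded combinations really are excluded — is routine bookkeeping. (The degenerate situation $\dT(\kM)=\0$, in which $\Bbbk\kM$ is Morita equivalent to the semisimple algebra $\prod_kF_k$ and $\kM$ is representation finite for trivial reasons, should be noted separately; it is the empty, vacuously pure, instance of the reduction above.)
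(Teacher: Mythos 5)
Your reduction is correct and is exactly the route the paper takes: the paper presents Theorem~\ref{th33} as an immediate consequence of Corollary~\ref{cor31}, Theorem~\ref{th23} and Remark~\ref{rem32} without writing out the case-matching, and your pure-case arithmetic (purity forces $c_k=1$ for all $k$, no simple component of $\Bbbk G$ can be missing from $\dT(\kM)$, hence $S^{\pm}(\dT_0)=\#(G)$) supplies precisely the detail the paper leaves implicit. Your observation about the degenerate case $\dT(\kM)=\0$ (where $\mu$ is invertible, $\Bbbk\kM$ is semisimple, and $\kM$ is representation finite regardless of $\#(G)$) is a genuine edge case that the literal wording of part (1b) glosses over, and it is right to flag it separately.
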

   
   \begin{remark}\label{r3} 
    According to Proposition~\ref{prop110}, the algebra $\Bbbk\kM(G,p,q,\mu)$ only depends on the ranks $r_k$. Elementary transformations
    of the matrix $\mu$ do not change these ranks. Obviously, using them one can obtain a matrix $\mu'$ such that there is a non-zero element
    in every row and in every column. Therefore, $\Bbbk\kM(G,p,q,\mu)\simeq\Bbbk\kM(G,p,q,\mu')$ and $\kM(G,p,q,\mu')$ is a 0-simple
    semigroup \cite[Thm.3.3]{ClifPres1}. Thus, for every Rees matrix semigroup with a non-zero sandwich matrix there is a 0-simple semigroup 
    with the same representation theory.
   \end{remark}
    
   If a finite semigroup $\kS=\bigvee_{i=1}^t\kM_i$ is a union of pairwise annihilating Rees matrix semigroups $\kM_i$ with common $0$, 
   its semigroup algebra $\Bbbk\kS$ is a direct product of semigroup algebras $\Bbbk\kM_i$ and all of them are Munn algebras. 
   So we obtain the following result.
   
   \begin{theorem}\label{th34} 
   Let $\kS=\bigvee_{i=1}^t\kM_i$, where $\kM_i=\kM(G_i,m_i,n_i,\mu_i)$ are finite Rees matrix semigroups, $\Bbbk$ be a field such that 
    $\chr\Bbbk\nmid \#(G_i)$ for all $i$. Denote
   \begin{align*}
    & T_>=\sum_{m_i>n_i} \#(G_i),\\
    & T_<=\sum_{m_i<n_i} \#(G_i),\\
    &  \dT_0=\bigcup_{m_i\ne n_i}\dT(\kM_i),\\
    & \dT_1=\bigcup_{m_i= n_i}\dT(\kM_i)
   \end{align*}
   \begin{enumerate}
   \item\hspace*{-1ex}%
  \footnote{\,If the field $\Bbbk$ is algebraically closed, this result easily follows from that of Ponizovski{\u\i} \cite[\rm{n}$^\circ$5]{Poniz} and Remark
  \ref{rem32}.}
      $\kS$ is representation finite over the field $\Bbbk$ \iff
 		\begin{enumerate}
 		\item either  $\dT_1=\0$, $\max\{T_>,T_<\}\le3$ and all triples from $\dT_0$ are either $(d_k,1,0)$ or $(d_k,0,1)$ 
 		\item or $\dT_1=\{(1,1,1)\}$, $T_>+T_<\le 3$, $\max\{T_>,T_<\}\le2$ and all triples from $\dT_0$ are either $(d_k,1,0)$ or $(d_k,0,1)$.
 		\end{enumerate}
 	\item   $\kS$ is representation tame over the field $\Bbbk$ \iff
 		\begin{enumerate}
 		\item either $\dT_1=\0$, $\max\{T_>,T_<\}= 4$ and all triples from $\dT_0$ are either $(d_k,1,0)$ or $(d_k,0,1)$,
 		\item or $\dT_1=\{(1,1,1)\}$, $T_>=T_<= 2$ and all triples from $\dT_0$ are either $(d_k,1,0)$ or $(d_k,0,1)$,
 		\item or $\dT_0=\0$ and either $\dT_1=\{(1,1,1),(1,1,1)\}$ or $\dT_1=\{(2,1,1)\}$,
 		\item or $\dT_1=\0$ and $\dT_0=\{(1,2,0)\}$ or $\dT_0=\{(1,0,2)\}$.
 		\end{enumerate}
 		In the last case there is a unique index $i$ such that $m_i\ne n_i$ and the corresponding group $G_i=\{1\}$.
 	\item  In all other cases $\kS$ is representation wild over the field $\Bbbk$.
   \end{enumerate}   
   \end{theorem}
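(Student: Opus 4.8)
The plan is to reduce the problem to Theorem~\ref{th23} and then translate its conclusion into the Rees-data language of the statement. First I would use the fact, recalled just above, that since the $\kM_i$ pairwise annihilate and share a common zero one has $\Bbbk\kS=\prod_{i=1}^t\Bbbk\kM_i$; hence, by Corollary~\ref{cor31}, $\Bbbk\kS$ is Morita equivalent to $\prod_{i=1}^t\prod_k\mM(F_{ik},m_{ik},n_{ik})$, a product of algebras of the form $\mM(F,m,n)$ indexed by the pairs $(i,k)$. By Theorem~\ref{th23} the representation type of $\kS$ therefore depends only on the multiset $\dT(\kS)=\bigcup_{i=1}^t\dT(\kM_i)=\dT_0\cup\dT_1$, and what is left is bookkeeping. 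I keep the notation of Section~\ref{s3}: $d_{ik}=\dim_\Bbbk F_{ik}$, while $c_{ik}$ and $r_{ik}=\rk\mu_{ik}$ are attached to the irreducible $\Bbbk G_i$-modules, so that $m_{ik}=m_ic_{ik}-r_{ik}$, $n_{ik}=n_ic_{ik}-r_{ik}$, and, by Remark~\ref{rem32}, $m_{ik}-n_{ik}=(m_i-n_i)c_{ik}$.

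The key is a dictionary between the two ways of splitting $\dT(\kS)$. On one side, by Remark~\ref{rem32}, for each $i$ every irreducible $\Bbbk G_i$-module with $(m_{ik},n_{ik})\ne(0,0)$ gives one triple of $\dT(\kM_i)$, and all these triples have $m_{ik}-n_{ik}$ of the same sign; hence if $m_i=n_i$ they are all balanced ($m_{ik}=n_{ik}$) and lie in $\dT_1$, while if $m_i\ne n_i$ every irreducible contributes, the triples all satisfy $m_{ik}>n_{ik}$ or all satisfy $m_{ik}<n_{ik}$ according as $m_i>n_i$ or $m_i<n_i$, and they lie in $\dT_0$. On the other side, Theorem~\ref{th23} writes $\dT(\kS)=\dT_0^{*}\sqcup\dT_1^{*}$ with $\dT_0^{*}$ the sub-multiset of triples $(d,1,0)$ and $(d,0,1)$ and $\dT_1^{*}$ the rest; a balanced triple has neither form, so $\dT_1\sbe\dT_1^{*}$, and a triple of $\dT_0$ lies in $\dT_0^{*}$ exactly when it equals some $(d,1,0)$ or $(d,0,1)$.

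The one step that is not pure bookkeeping is the identity $S^-(\dT_0^{*})=T_>$, $S^+(\dT_0^{*})=T_<$ under the hypothesis that all triples of $\dT_0$ are of the form $(d,1,0)$ or $(d,0,1)$: if $\kM_i$ has $m_i>n_i$ and all of its triples are $(d_{ik},1,0)$, then $1=m_{ik}-n_{ik}=(m_i-n_i)c_{ik}$ forces $c_{ik}=1$ and $m_i-n_i=1$ for every $k$, so, by the decomposition $\Bbbk G_i\simeq\prod_k\Mat(c_{ik},F_{ik})$ (valid since $\chr\Bbbk\nmid\#(G_i)$), $\sum_kd_{ik}=\sum_kc_{ik}^2d_{ik}=\dim_\Bbbk\Bbbk G_i=\#(G_i)$; summing over the relevant $i$ gives $S^-(\dT_0^{*})=T_>$, and symmetrically $S^+(\dT_0^{*})=T_<$. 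The only way for $\dT_1^{*}$ to stay small without all triples of $\dT_0$ being of this special form is $\dT_0^{*}=\0$ with $\dT(\kS)$ one of the four exceptional multisets of Theorem~\ref{th23}(2a); there $(1,1,1)$ and $(2,1,1)$ are balanced, forcing $\dT_1=\dT(\kS)$ and $\dT_0=\0$, whereas $(1,2,0)$ and $(1,0,2)$ are single unbalanced triples, each coming from a unique $i$ with $m_i\ne n_i$ for which $\dT(\kM_i)$ is a singleton; then $\Bbbk G_i\simeq\prod_k\Mat(c_{ik},F_{ik})$ has a single simple module, hence is simple and equals its augmentation quotient $\Bbbk$, i.e.\ $G_i=\{1\}$.

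Finally I would substitute $\dT_1^{*}=\dT_1$ and $\dT_0^{*}=\dT_0$ (when all triples of $\dT_0$ are of the special form), together with $S^-(\dT_0^{*})=T_>$ and $S^+(\dT_0^{*})=T_<$, into the three clauses of Theorem~\ref{th23}; clause by clause this yields precisely (1), (2), (3) above, the four exceptional multisets producing exactly cases (2c) and (2d) and the closing sentence about $G_i$. The hard part will be purely organizational: pushing the finitely many configurations of Theorem~\ref{th23} through this dictionary and verifying each one lands on the configuration claimed here, while tracking multiplicities in $\dT(\kS)$; the only genuinely substantive ingredient is the identity $\sum_kd_{ik}=\#(G_i)$ and the observation that requiring all triples of $\dT_0$ to be of the form $(d,1,0)$ or $(d,0,1)$ is exactly what forces all $c_{ik}=1$ and $|m_i-n_i|=1$.
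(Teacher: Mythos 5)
Your proposal is correct and follows essentially the same route as the paper: the paper derives Theorem~\ref{th34} directly from the decomposition $\Bbbk\kS=\prod_i\Bbbk\kM_i$, Corollary~\ref{cor31}, Theorem~\ref{th23} and Remark~\ref{rem32}, leaving the translation implicit, and your dictionary (in particular the identity $S^-(\dT_0)=T_>$, $S^+(\dT_0)=T_<$ via $\sum_k d_{ik}=\#(G_i)$ once all $c_{ik}=1$, and the treatment of the exceptional multisets forcing $G_i=\{1\}$) is exactly the bookkeeping the paper omits. No gaps.
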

   
   \section*{Acknowledgements}
   \footnotesize
   
   This work was supported within the framework of the program of support of priority for the state scientific researches and scientific and technical (experimental) developments of the Department of Mathematics NAS of Ukraine for 2022-2023 (Project ``Innovative methods in the theory of differential equations, computational mathematics and mathematical modeling'', No. 7/1/241). The final version of the paper was prepared during the stay of the first author in the Max-Plank-Institute for
Mathematics (Bonn) and he is grateful to the Institute for their kind support. 
  
\bibliographystyle{acm}
\bibliography{DrozdPlakoshMunn}

\end{document}